\title{\LARGE \bf Tutorial on Congestion Control in Multi-Area Transmission Grids \\via Online Feedback Equilibrium Seeking
}
	\author{Giuseppe Belgioioso, Saverio Bolognani, Giulia Pejrani, and Florian D\"orfler
		\thanks{This work was supported by NCCR Automation, a National Centre of Competence in Research, funded by the Swiss National Science Foundation (grant number 180545).}
\thanks{The authors are with Automatic Control Laboratory, Department of Electrical Engineering and Information Technology,
        ETH Zurich, Physikstrasse 3 8092 Zurich, Switzerland. (e-mail:{\tt \{gbelgioioso, bsaverio, dorfler\}@ethz.ch}, \tt{gpejrani@student.ethz.ch}).
        }
		}
\DeclareMathOperator{\subjectto}{subject~to}
\newtheorem{theorem}{Theorem}
\newtheorem{definition}{Definition}
\newtheorem{remark}{Remark}
\newtheorem{assumption}{Assumption}
\newcommand{\col}{\textrm{col}}
\newcommand{\proj}{\mathrm{proj}}
\newcommand{\Id}{\mathrm{Id}}
\newcommand{\diag}{\operatorname{diag}}
\newcommand{\nc}{\mathcal{N}}
\newcommand{\reals}{\mathbb{R}}
\newcommand{\minimize}{\textrm{minimize}}
\newcommand{\mto}[0]{\rightrightarrows}
\newcommand{\eigmax}[1]{{\lambda}_{\max} \left({#1}\right)}
\newcommand{\eigmin}[1]{{\lambda}_{\min}\left({#1}\right)}
\newcommand{\mc}{\mathcal}
\newcommand{\bb}{\mathbb}
\newcommand{\R}{\bb R}
\begin{document}

\maketitle
\thispagestyle{empty}
\pagestyle{empty}

\begin{abstract}
Online feedback optimization (OFO) is an emerging control methodology for real-time optimal steady-state control of complex dynamical systems. This tutorial focuses on the application of OFO for the autonomous operation of large-scale transmission grids, with a specific goal of minimizing renewable generation curtailment and losses while satisfying voltage and current limits. When this control methodology is applied to multi-area transmission grids, where each area independently manages its congestion while being dynamically interconnected with the rest of the grid, a non-cooperative game arises. In this context, OFO must be interpreted as an online feedback equilibrium seeking (FES) scheme. Our analysis incorporates technical tools from game theory and monotone operator theory to evaluate the stability and performance of multi-area grid operation. Through numerical simulations, we illustrate the key challenge of this non-cooperative setting: on the one hand, independent multi-area decisions are suboptimal compared to a centralized control scheme; on the other hand, some areas are heavily penalized by the centralized decision, which may discourage participation in the coordination mechanism.
\end{abstract}

\section{Introduction}

%\GB{need for real-time control of the grid}\\
To achieve climate goals and enhance energy independence, an increasing amount of renewable power is being integrated into the grid \cite{World2021, Renewable2021} and is replacing traditional form of generation.
This renewable generation is typically dispersed across the grid or located where the primary energy source (wind, solar) is mostly available.
Moreover, generation from renewable sources follows temporal patterns that are not fully predictable and depend as well on the generation technology.
These temporal and spatial variability of generation, together with the increasing demand for electricity (also driven by the electrification of road transportation) poses unprecedented challenges on the operation of power transmission grids: as the network reaches its limits, renewable power often needs to be curtailed to avoid congestion issues such as overloaded lines and over-voltages. 
%A common approach to preventing overloaded lines is to curtail renewable generation to a fixed maximum value based on seasonal thermal ratings, disregarding consumption and other forms of generation. For instance, this measure is presently being implemented in the Blocaux area of France. 

Manual or semi-automated mechanisms for congestion control in power transmission grids are unsuited for these new tasks, for two main reasons:
\begin{itemize}
    \item with sampling times in the order of minutes, they cannot for safely respond to generation variability;
    \item they do not scale to the complexity presented by the huge number of small-size renewable generators.
\end{itemize}
Consequently, there is a growing need to enhance real-time automation and implement control actions at shorter intervals, ideally every few seconds. The French operator RTE estimates savings of billions of Euros over a decade if real-time control of power flows can prevent the construction or reinforcement of power lines \cite{Schema2019}. In the US, the connection of more that 2000 GW of renewable generation is currently being delayed because of grid capacity constraints \cite{osti_1969977}.

Online feedback optimization (OFO) has been proposed as an effective approach to design real-time congestion control mechanisms for the electric power grid \cite[Section IV]{molzahn2017survey}. OFO lies at the intersection of feedback control design and nonlinear optimization: its goal is to design a control policy that makes the optimal steady state of the system (i.e., the uncongested operation of the grid) an attractive equilibrium point for the closed-loop dynamics. 
This approach has been studied in more general settings (not limited to power grids) in a number of recent papers \cite{HauswirthDoerfler21c, bernstein2019online, lawrence2020linear, Colombino2020, Bianchin2022, simonetto2020time}. 

In the domain of power distribution grids, these methods have been proposed in multiple variations (see the review in \cite[\S IV.D]{molzahn2017survey}), specialized to Volt/VAr regulation \cite{Bolognani2015,li2022robust, qu2019optimal} or more general optimality criteria \cite{DallAnese2018}, tested experimentally \cite{OrtmannBolognani20}, and even deployed in the real world \cite{kroposki2020good,OrtmannDoerfler23}.

The application to congestion control in the transmission grid has been explored less. For example, \cite{tang2020measurement} focuses on the voltage control problem, while \cite{OrtmannBolognani22} considers both voltage and power flow control, and introduces a novel sub-transmission benchmark model that represents a real French grid. We will review the main findings of this last work in the numerical experiments of Section~\ref{sec:singlearea}.

In these works, the transmission grid is a single ``plant'' and the objective of the proposed OFO controllers is to drive it to an operating point that minimizes a single global cost function.
In reality, however, modern transmission grids consist of different interconnected areas that are locally managed by different transmission systems operators (TSOs) but physically interconnected. This is the case, for example, of national and sub-national networks in continental Europe and of Independent System Operators in North America.
In such multi-area setting, a global OFO controller is not practical, as such scheme would requires a coordination and exchange of sensitive information between the areas, and may be not incentive-compatible. Local OFO controllers, however, would be readily deployable on the different areas, since their implementation only requires local measurements, the ability of controlling local generators, and a rough estimate of the static model of the local part of the grid. A natural question in this more realistic scenario is whether we can expect the overall multi-area transmission grid, resulting from the interconnection of the locally-controlled areas, to retain any stability and efficiency property. 

%\GB{Several game-theoretic methods have been proposed}\\
Game theory, which studies dynamics of conflict and cooperation between self-interested rational decision makers, offers a powerful framework to approach this question. The development of game-theoretic controllers for economic steady-state regulation of complex systems has been recently studied in different works \cite{BelgioiosoDoerfler21, BelgioiosoDoerfler22, agarwal2022game, romano2022game}. All these works focus on the design of game-theoretic feedback controllers, but the technical tools therein can be also used for analysis purposes.

%\GB{Content and contributions}\\
In this tutorial, we present a game-theoretic analysis of autonomous real-time control of multi-area transmission grids, in which each area selfishly regulates its congestion using online feedback optimization while being dynamically interconnected with the rest of the grid. In the first part, we show how to design various feedback controllers for a single area of the transmission grid using the OFO framework. Further, we present realistic simulations based on the newly proposed benchmark in \cite{OrtmannBolognani22} which demonstrate their effectiveness in regulating the grid to optimal and safe operating points. In the second part, we investigate the behaviour of a multi-area transmission grid, where the sub-areas independently manage congestion using OFO while remaining dynamically connected through the transmission grid. We use tools from game theory and monotone operators to establish conditions for stability of the closed-loop dynamics of the grid. Finally, we present some numerical simulations on the on the IEEE 30 bus power flow test case to corroborate the theory and study the performance of the multi-area OFO control design.

We conclude the tutorial by discussing how multi-area congestion control in a competitive setting yields some intrinsic inefficiency, and how this defines a future research endeavour: the design of suitable incentives that ensure that the decision of multiple independent grid operators remains aligned with global efficiency and sustainability goals.

\subsection{Notation}
Given a positive definite matrix $P\!=\!P^\top$, $\|x\|_P = \sqrt{x^T P x}$. The largest (smallest) eigenvalue of $P$ is denoted by $\eigmax{P}$, ($\eigmin{P}$). Given $N$ vectors $x_1,\ldots,x_N$, we denote by $\col(x_1,\ldots,x_N)=[x_1^\top \ldots x_N^\top]^{\top}$ their vertical concatenation. Given $N$ scalar $y_1,\ldots,y_N$, we denote by $\diag(y_1,\ldots,y_N)$ the diagonal matrix with  $y_1,\ldots,y_N$ on the main diagonal. Given a closed convex set $\Omega \subseteq \reals^n$, $\iota_{\Omega}:\reals^n \to \{0,\infty\}$ denotes its indicator function, $\nc_{ \Omega}(x):\Omega \rightarrow \reals^n$ denotes its normal cone operator, and $\proj_{\Omega}:\reals^n \to \Omega$ is the Euclidean projection onto $\Omega$. A set-valued mapping $\mc B:\reals^n \mto \reals^n$ is monotone if it satisfies $(u-v)^\top (x-y) \geq 0 $ for all vectors $x, y \in \R^n$, $u \in \mathcal{B} (x)$, and $v \in \mathcal{B} (y)$. A single-valued mapping $F:\reals^n \rightarrow \reals^n$ is cocoercive, with parameter $\mu>0$, if it satisfies $(F(x)-F(y))^\top (x-y) \geq \mu \|F(u)-F(y)\|^2 $ for all vectors $x, y \in \R^n$.

\section{Single-Area Transmission Grid Control}
\label{sec:singlearea}
\label{eq:subtr}

The problem of real-time control of a transmission grid using online feedback optimization (OFO), which is schematically represented in Figure~\ref{fig:blockdiagram}, is characterized by
\begin{itemize}
    \item a set of \emph{inputs} that can be actuated, e.g., the curtailment of active power generation from renewable sources, voltage set-points for voltage-controlled generators, and reactive power set-points of inverter-based generators;
    \item a set of \emph{outputs} that can be measured on the system, including both line and bus measurements; for simplicity, we will assume that the full state of the grid (or a dynamic estimation, see \cite{PicalloDoerfler20}) is available.
\end{itemize}
In the remainder of the paper, we denote by $u$ this multidimensional input and by $y$ this multidimensional output.

These signals are related to each other by the algebraic model (i.e., the input-out model of the transmission grid)
\begin{equation}
\label{eq:hmap}
y=h(u; w),
\end{equation}
where $w$ is a set of exogenous disturbances, e.g., uncontrollable nodal power injections (demand and generation).

\begin{figure}[h]
    \centering
    \includegraphics[width=\columnwidth]{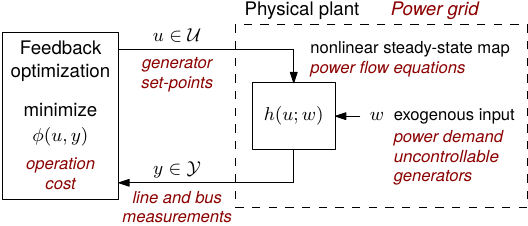}
    \caption{The feedback structure of Online Feedback Optimization of a single-area transmission grid.}
    \label{fig:blockdiagram}
\end{figure}

The function $h$ represents an input-output representation of the power flow equations.
These are usually given in implicit form, as they are often derived from Kirchhoff's and Ohm's laws and from algebraic models for loads and generators. In their implicit form, they determine a manifold of power flow solutions which are compatible with the physics of the system (see \cite{Bolognani_manifoldlinearization} for details on this interpretation of the power flow equations).
By assuming the existence of an input-output map $h$, we are also assuming that a chart for this manifold exists, at least in the area of interest, and that this local coordinate map is uniformly not degenerate in the input coordinates.
The technical requirement can be traced back to standard implicit-function-theorem conditions, which effectively corresponds to assuming voltage stability of the grid (invertibility of the power flow Jacobian \cite[Ch. 7]{Cutsem2007VoltageStability}).

An alternative, but compatible, interpretation is that $y=h(u; w)$ is the steady-state map of a stable dynamical system that represents the dynamics of power lines, generators, and loads. The analysis of online feedback optimization in the presence of plant dynamics goes beyond the scope of this tutorial (see for example \cite{HauswirthDoerfler21, Colombino2020, LawrenceLTI} and the other approaches reviewed in \cite[Section 4.2]{HauswirthDoerfler21c}).

\begin{remark}
The disturbance $w$ also contains the power flows from/to different neighboring parts of the grid. This modeling choice automatically assumes that the behavior of the rest of the grid is \emph{exogenous}, i.e., it does not respond to the decisions taken by the controller that we are designing.
We will see in Section~\ref{sec:multiarea} that this assumption is critical and needs to be revisited in the case of a multi-area setting.
\end{remark}

Finally, specifications for the real-time congestion control problem are given in the form of:
\begin{itemize}
    \item A set $\mathcal U:= \{u ~|~ Au \le b\}$ of \emph{feasible control inputs}, corresponding for example to the feasible region of the different actuators;
    \item A set $\mathcal Y:= \{y ~|~ Cy \le d\}$ of \emph{feasible outputs}, encoding for example voltage and line current limits;
    \item A continuously differentiable \emph{cost function} $\varphi(u,y)$ that represents the resulting operating cost of the grid; for example, the cost of active power curtailment of renewable generators, power losses, etc.
\end{itemize}

{
The resulting static optimization problem reads as
\begin{subequations}\label{eq:optimizationproblem}
\begin{align}
    \underset{u,y}{\minimize} & \quad \varphi(u,y) \\
    \subjectto & \quad y = h(u; w)  \label{eq:h-constraint}\\
    & \quad u \in \mathcal U \\
    & \quad y \in \mathcal Y. \label{eq:output-constraints}
\end{align}
\end{subequations}
In general, this problem is non-convex due to the equality constraint \eqref{eq:h-constraint}.
In the remainder of the paper, we work under the assumption that the problem is feasible, namely, a valid power flow solution exists.

Following the principle of online feedback optimization, we aim at designing an iterative update law that solves \eqref{eq:optimizationproblem}, i.e., that drives the decision variables $u,y$ to a local solution of \eqref{eq:optimizationproblem}.
}
To be compatible with the implementation in closed loop with the physical system, the algebraic constraint \eqref{eq:h-constraint} needs to be satisfied at all iterations. 
While this requirement reduces the degrees of freedom in the design, it is also a crucial source of robustness of the OFO approach: the availability of $y$ as a measurement allows to ``outsource'' the evaluation of the function $h$ to the physical system, reducing the reliance on model information in a substantial way.

Notice that this online setting reduces the design problem to the choice of the update iteration for the controlled decision variable $u$, while the decision variable $y$ is completely and automatically determined by \eqref{eq:h-constraint}. This setup is to be interpreted as the interconnection of the input iteration and the algebraic plant, and the design goal is to ensure that the local solution of \eqref{eq:optimizationproblem} are asymptotically stable equilibria for the closed-loop systems. See \cite{HauswirthDoerfler21b} for a formal discussion of the connection between local convexity properties of the problem and stability properties of the closed-loop interconnection, although for continuous-time dynamics.

Multiple iterations (mostly inspired by iterative algorithms in nonlinear optimization) serve this purpose, including
\begin{itemize}
    \item (projected) gradient iterations \cite{HauswirthDoerfler21c,HaeberleDoerfler21}
    \item primal-dual saddle-point dynamics \cite{HauswirthDoerfler21c,Colombino2020,Bernstein2019}
    \item safe gradient flows \cite{allibhoy2022control}
    \item regularized primal-dual iterations \cite{Bianchin2022}
    \item quasi-Newton flows \cite{Low2017}
    \item sequential convex programming \cite{BelgioiosoDoerfler22}
    \item and others.
\end{itemize}

We refer to \cite{BelgioiosoDoerfler22}, for a general framework encompassing most of the algorithms listed above.
An important difference between these iterations is how they handle the constraints of the problem, and in particular the output constraints \eqref{eq:output-constraints}.
To illustrate this point, we quickly review a few alternatives.

\paragraph{Penalty function and gradient descent iteration}

A penalty function $p(y)$ can be used as a proxy for the output constraint \eqref{eq:output-constraints}, yielding the approximate problem
\begin{subequations}
\label{eq:penalty}
\begin{align}
       \underset{u,y}{\minimize} & \quad \overbrace{\varphi(u,y) + p(y)}^{:= J(u,y)} \label{eq:pen_ref_constr2}\\
    \subjectto & \quad y = h(u; w) \label{eq:pen_ref_constr1}\\
    & \quad u \in \mathcal U.
\end{align}
\end{subequations}
When the penalty term $p$ is chosen continuously differentiable, after substituting \eqref{eq:pen_ref_constr1} within \eqref{eq:pen_ref_constr2}, a projected gradient descent iteration for this problem takes the form
%\begin{equation}
%    \label{eq:gradient-descent0}
%    u^{k+1} = \textrm{proj}_{\mathcal{U}}\big[
%        u^{k} -\alpha \nabla_u J(u^k,h(u^k;w)
%        )
%    \big].
%\end{equation}
%
%\begin{multline}
%    \label{eq:gradient-descent}
%    u^{k+1} = \textrm{proj}_{\mathcal{U}}\big[
%        u^{k} -\alpha \nabla_u \varphi(u^k,y^k) \\
%        - \alpha \nabla_u h(u^k;w)^\top 
%        \left( 
%            \nabla_y \varphi (u^k,y^k) + \nabla_y p(y^k)
%        \right)
%    \big].
%\end{multline}
\begin{multline}
    \label{eq:gradient-descent}
    u^{k+1} = \textrm{proj}_{\mathcal{U}}\big[
        u^{k} - \alpha \nabla_u J(u^k,y^k) \\
        - \alpha \nabla_u h(u^k;w)^\top  
            \nabla_y J (u^k,y^k)
    \big],
\end{multline}
where $\alpha$ is a tunable gain/step size.
Convergence of this iteration to the set of local minimizers can be guaranteed without convexity assumption (see \cite{HauswirthDoerfler21c} and \cite{BelgioiosoDoerfler22} for a continuous-time and a sampled-data stability analyses, respectively).

\paragraph{Projected gradient descent iteration}

Alternatively, we can maintain the original output constraint $y \in \mathcal Y$ in \eqref{eq:optimizationproblem} and perform a projected steepest-descent iteration. 
Following the derivation in~\cite{HaeberleDoerfler21}, the resulting update for the decision variable $u$ takes the form 
\begin{equation} \label{eq:feedbackupdate} 
    u^{k+1} = u^k + \alpha \sigma(u^k,w,y^k)
\end{equation}
where
\begin{align}\label{eq:projection_QP}
\begin{split}
    \sigma(u,w,y) =\\
    \arg\min_{\delta u} \  &\| \delta u 
    + \nabla_u \varphi (u,y) + \nabla_u h(u;w)^\top \nabla_y \varphi (u,y)  \|^2
    \\ 
    \subjectto \  &A (u+\delta u)\leq b \\ & C (y+\nabla_u h(u;w) \delta u)\leq c.
\end{split}
\end{align}
Also in this case, convergence to the set of local minima can be guaranteed without convexity assumptions: Theorem 3 in \cite{HaeberleDoerfler21} states that under weak technical assumptions on the problem and with sufficiently small (but not vanishing) gain $\alpha$, the closed loop system is guaranteed to converge to the set of first-order optimal points of~\eqref{eq:optimizationproblem}, and aysmptotically stable equilibria are strict local minima.

\paragraph{Primal-dual iteration}

%Let $g(y)$ be a function such that $\mathcal Y =\{ y ~|~ g(y)\le 0\}$. 
%\blue{Why don't we use the already defined affine constraints instead?}
%\textcolor{red}{just for simplicity in the notation, but maybe it's a good idea to change it. I gave it a try, the old version is commented.}
%Then the Lagrangian for \eqref{eq:optimizationproblem} (after we substitute $y=h(u;w)$) takes the form
%\[
%\mathcal L(u,\lambda) 
%= \varphi(u,h(u;w)) + \lambda^\top g(y).
%\]
%A saddle points of $\mathcal L$ can be determined by the primal-dual iteration
%\begin{align*}
%    u^{k+1} &= \textrm{proj}_{\mathcal{U}}\big[
%        u^{k} - \alpha \nabla_u \varphi(u^k,y^k) \\
%        & \quad  - \alpha \nabla_u h(u^k;w)^\top 
%        \left( 
%            \nabla_y \varphi (u^k,y^k) + \nabla_y g(y^k)^\top \lambda^k
%        \right)
%    \big]\\
%    \lambda^{k+1} &= \textrm{proj}_{\mathbb R_{\ge 0}}\big[
%            \lambda^k + \beta g(y^k)
%        \big].
%\end{align*}
By dualizing the output constraints and by substituting $y=h(u;w)$, we obtain a Lagrangian for \eqref{eq:optimizationproblem} that takes the form
\[
\mathcal L(u,\lambda) 
= \varphi(u,h(u;w)) + \lambda^\top \left( Cy - d \right).
\]
A saddle points of $\mathcal L$ can be determined by the primal-dual iteration
\begin{align*}
    u^{k+1} &= \textrm{proj}_{\mathcal{U}}\big[
        u^{k} - \alpha \nabla_u \varphi(u^k,y^k) \\
        & \quad  - \alpha \nabla_u h(u^k;w)^\top 
        \left( 
            \nabla_y \varphi (u^k,y^k) + C^\top \lambda^k
        \right)
    \big]\\
    \lambda^{k+1} &= \textrm{proj}_{\mathbb R_{\ge 0}}\big[
            \lambda^k + \beta \left( Cy^k - d \right)
        \big].
\end{align*}

The conditions for convergence are not equally weak in this case: convexity conditions are needed both to connect saddle-points of the Lagrangian to the minima of the original problem, and to prove convergence of the iteration \cite{HauswirthDoerfler21c}.

\smallskip 
A common feature of all these iterations is that the only piece of model information that is needed is $\nabla_u h(u,w)$, namely, the sensitivity of the output $y$ with respect to the input $u$.
In the application at hand, this corresponds to the sensitivity of the solution of the state of the power grid with respect to the controllable set-points of the generators.
From a modeling perspective, these sensitivities are directly related to the first-order linearization of the power flow equations, i.e., the local coordinate chart of the power flow manifold (see \cite{Bolognani_manifoldlinearization} for a closed-form expression).
In practice, this is a well-known quantity in power flow analysis and a generalization of the concept of Power Transfer Distribution Factors. These sensitivities are often available to the operators as byproduct of their grid state-estimation and planning.

We stress that none of these iterations require to evaluate the plant model $y=h(u;w)$ numerically, as they rely on directed measurements of $y$, which dramatically increase the robustness of these approaches to model uncertainty, as observed both in numerical simulations \cite{OrtmannBolognani22} and in real-world experiments \cite{OrtmannBolognani20,OrtmannDoerfler23}.
Some formal guarantees of such robustness have been derived, based on different models of the uncertainty in \cite{Colombino2019robustness,Pasquini2021robustness}.

\subsection{Numerical experiments: Subtransmission grid control}

\begin{figure}[t]
    \centering
    \includegraphics[width=\columnwidth]{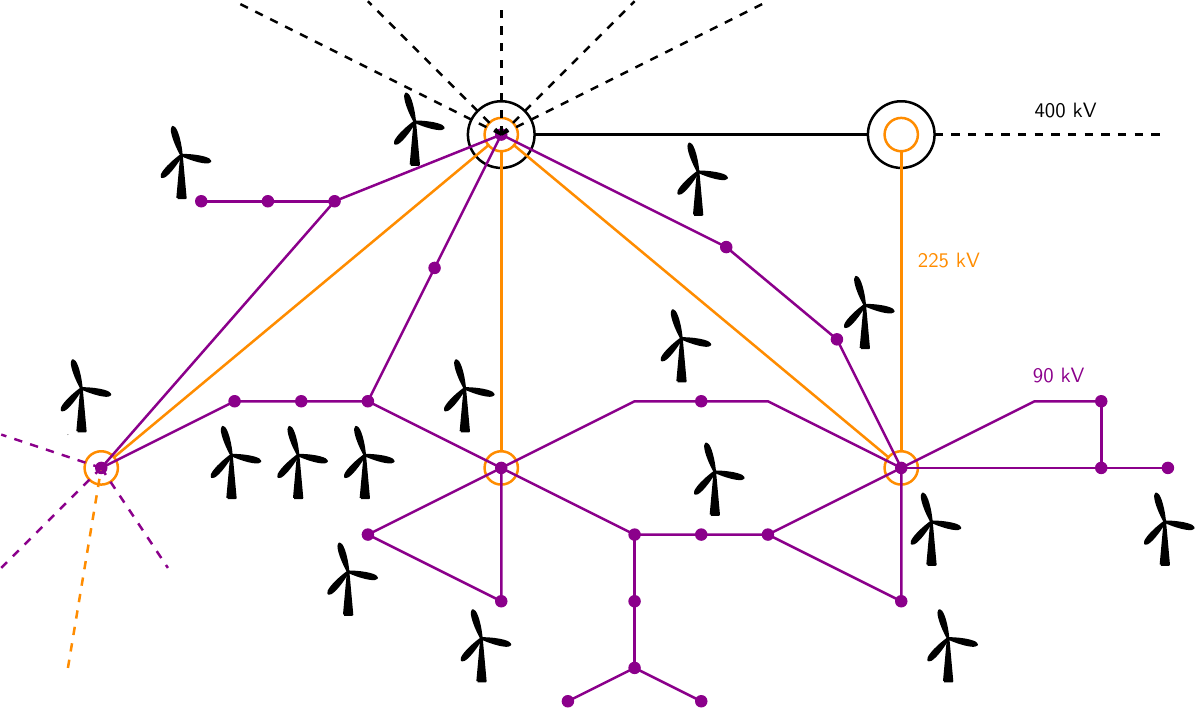}
    \caption{Schematic of the transmission grid in the area of Blocaux, France (31 buses, 58 branches), which constituted the benchmark for OFO controllers in  \cite{OrtmannBolognani22}. The dashed lines represent connections to neighboring parts of the grid.}
    \label{fig:blocaux}
\end{figure}

We quickly review the results in \cite{OrtmannBolognani22},
where the OFO strategy was tested via numerical experiements on a benchmark model of the theal French subtransmission grid. The entire grid consists of 7019 buses, 9657 branches, and 1465 generators.
The task in the benchmark is to minimize the losses and active power curtailment in the Blocaux area (schematically represented in Figure~\ref{fig:blocaux}). The area hosts 42 wind farms with power ratings between 0.5~MW and 102~MW, and a total installed wind power of 1274~MW.
The real-time controller decides active power curtailment and reactive power injection of the individual wind farms.\footnote{The controller tested in \cite{OrtmannBolognani22} is also capable to control the position of tap changer transformers, a discrete input that we excluded from this paper.}
While doing so, the controller was required to satisfy voltage magnitude limits at the buses and power flow limits on the lines.
It is assumed that the rest of the generators in the grid (both inside and outside the Blocaux area) maintain their set-point constants in the meanwhile.

A projected gradient descent controller as that in \eqref{eq:feedbackupdate} has been adopted for its ability to guarantee steady-state satisfaction of the constraints and better transient performances.

\begin{figure}[t]
    \centering
    \includegraphics[width=0.89\columnwidth]{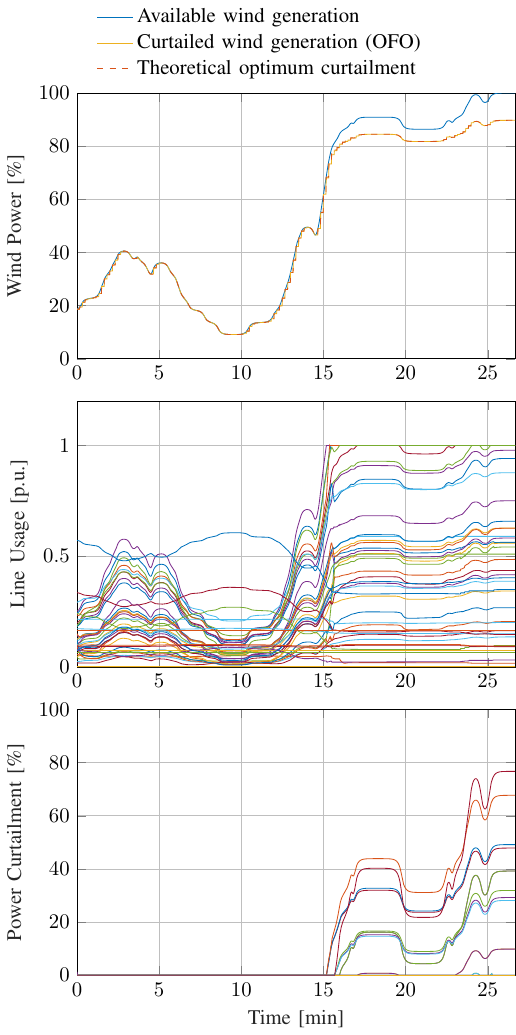}
    \caption{Performance of the feedback-optimization controller on the single-area UNICORN benchmark in the Blocaux area in France. We refer to the full simulation study in \cite{OrtmannBolognani22} for the details, which include the reactive power set-points and the resulting regulation of bus voltages to the desired range.
    }
    \label{fig:centralized}
\end{figure}

Exemplary results of a simulation can be seen in Figure~\ref{fig:centralized}, where the response to a fast variation of available wind power is reported.
It can be seen that the controller successfully tracks the optimal curtailment (which is a combination of multiple small curtail actions at each individual wind generator).
Note that the curtailment action is only used when line congestion constraints become active, and successfully prevents the overload of the lines.
We refer to \cite{OrtmannBolognani22} for a complete analysis of the performance of OFO on this benchmark transmission grid and for a discussion on the tuning of the proposed real-time control law.

\section{Multi-area Trasmission Grid Control}
\label{sec:multiarea}

In this section, we investigate the behaviour of multi-area transmission grids in which the areas selfishly regulate their congestion using OFO controllers, while being  dynamically interconnected with each other, as shown in Figure~\ref{fig:MAEx}.

%\subsection{Multi-area Online Feedback Equilibrium Seeking}
We consider a transmission grid consisting of $N$ interconnected areas, labelled by $i\in \mathcal{I}:= \{1,\ldots,N\}$. Each area $i\in \mathcal{I}$ controls its local inputs $u_i$, namely, active and reactive power setpoints of local generators, and measures its local outputs $y_i$, namely, voltage and current on the local lines. We assume that current magnitudes of lines that interconnect two areas are output measurements for both areas. With a game-theoretic notation, we use $u = \col(u_1,\ldots, u_N)$ to denote the stacked vector of control inputs of all areas, $u_{-i} = \col(u_1,\ldots,u_{i-1},u_{i-1}, \ldots,u_N)$ to denote the stacked vector of control inputs of all areas but area $i$, and $(u_i,u_{-i}) = u$.

In this multi-area setting, the state of each area $i$ depends not only on the local control $u_i$, but also on the control inputs of the other areas, $u_{-i}$, since the areas are dynamically interconnected via the grid. Formally, the control inputs $(u_i, u_{-i})$ and the local output $y_i$ is related by mapping
\begin{equation}
\label{IOSS_MA}
y_i = h_i(u;w) = h_i((u_i,u_{-i});w).
\end{equation}

In practice, each area $i\in \mathcal{I}$ aims at selfishly minimizing local operational costs (i.e., renewable generation curtailment of local generators, losses, etc.) while ensuring voltage and current safety limits on the local lines, yielding
\begin{align}
\label{eq:game}
\forall i \in \mathcal{I}:
\quad
\min_{u_i \in \mathcal{U}_i} \; \big\{ J_i(u_i, y_{i}) \;|\; y_i = h_i((u_i,{u_{-i}}); w)\big\},
\end{align}
where we used the same penalty-based formulation described in \eqref{eq:penalty}, in which the output constraints $y_i \in \mathcal{Y}_i$ are replaced with a penalty functions $p_i$ that is part of the cost functions $J_i$, i.e., $J_i(u_i,y_i) = \varphi_i(u_i,y_i) + p_i(y_i)$.

Note that these optimization problems are inter-dependent, i.e., the optimal control input of area $i$ depends on the control inputs of the other areas, and thus constitute a noncooperative game. Here, we assume that each area $i\in \mathcal{I}$ approaches its correspondent congestion control problem problem using the OFO controller in \eqref{eq:gradient-descent}.
This yields the dynamic feedback law
\begin{align}
\label{eq:OFOsinglearea}
%(\forall k \in \mathbb{N}) \; \; 
u_i^{k+1} = \textrm{proj}_{\mathcal{U}_i}\left[
u_i^{k}- \gamma_i F_i(u^k,y_i^k)
\right],%\quad\forall i \in \mathcal I
\end{align}
where the mapping $F_i$ is defined as
\begin{align}
\label{eq:locGrad}
F_i(u,y_i) &= \nabla_{u_i} J_i(u_i,y_i) + \nabla_{u_i} h_i(u;w)^\top \nabla_{y_i} J_i(u_i,y_i).
\end{align}
Intuitively, $F_i$ is the partial gradient of the cost function $J_i(u_i,h((u_i,u_{-i});w))$ with respect to $u_i$, and $\nabla_{u_i} h_i((u_i,u_{-i});w)$ is the Jacobian of the local steady-state mapping \eqref{IOSS_MA} also with respect to the local decision variable $u_i$ only. The latter corresponds to the sensitivity of the solution of the state of the local area of the grid with respect to the controllable set-points of the local generators.
%
%A perfect evaluation of this term can be however problematic, as it would require a perfect knowledge of the static model of the grid $h_i$, the exogenous disturbance $w$, and the setpoints of the other areas $u_{-i}$. In practice, each area can replace this term with an approximation, i.e.,
%\begin{equation}
%H_i \approx \nabla_{u_i} h_i(u;w), \quad \forall u \in \mathcal U, \, w \in \mathcal{W}.
%\end{equation}
%
%With this approximation in place, the dynamic feedback law \eqref{eq:OFOsinglearea} becomes completely decentralized, namely, each area can implement it locally using only local feedback measurements and without the need of any coordination or exchange of information with the other areas.

\begin{figure}[t]
    \centering
    \includegraphics[width=\columnwidth]{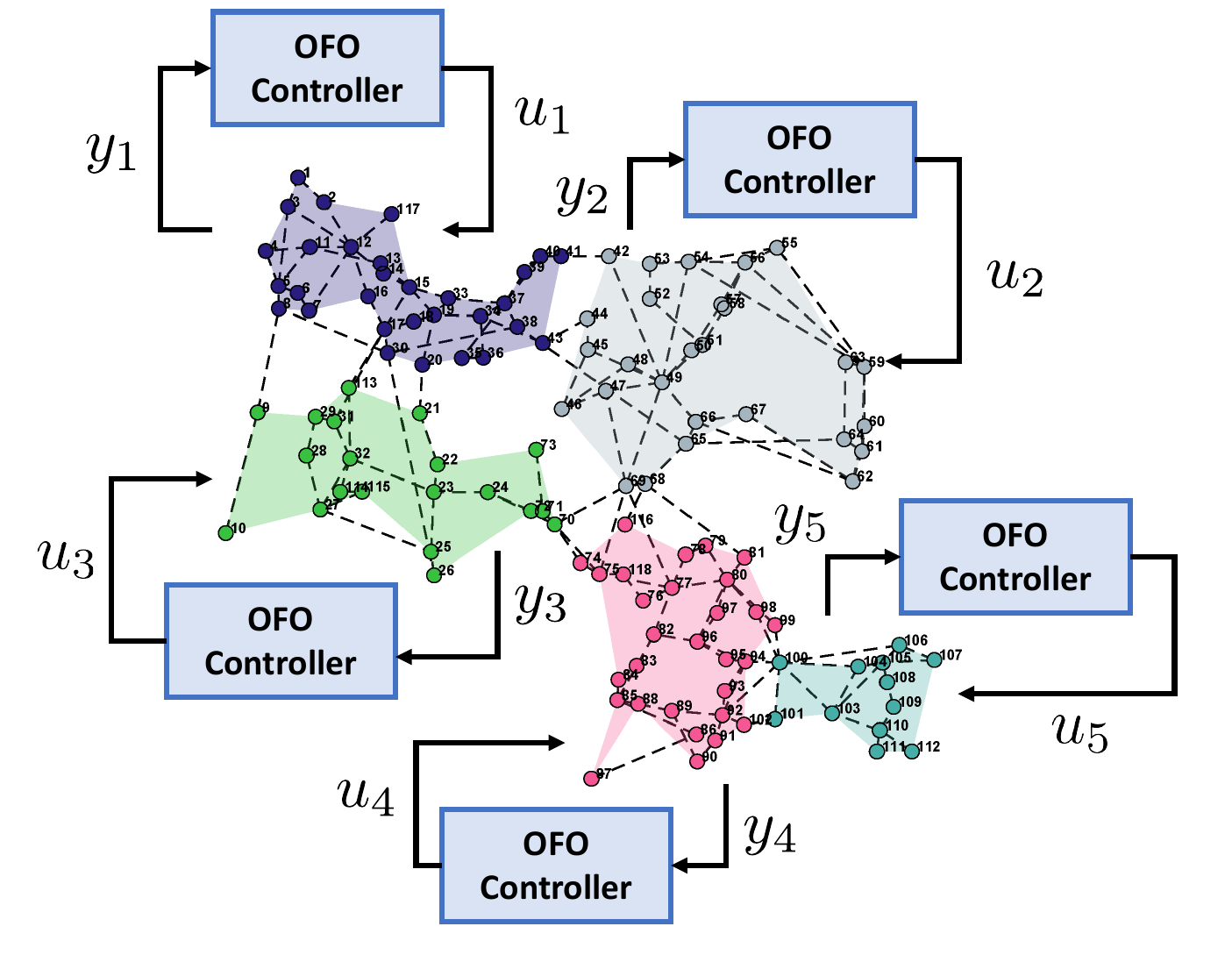}
    \caption{An example of a 5-area partition over the IEEE 118 bus test case (from \cite{hossain2022multi}). Each area has access to voltage and current measurements on the local lines, and regulates its congestions using Online Feedback Optimization.}
    \label{fig:MAEx}
\end{figure}

A natural question in this multi-area setting is whether the dynamics of the closed-loop transmission grid-OFO controllers interconnection is stable or not.
In the rest of this section, we show that, under some regularity conditions on the primitives of the game \eqref{eq:game} and appropriate design choices for the gains $\gamma_i$, the multi-area OFO controllers \eqref{eq:OFOsinglearea} indeeed stabilize the transmission grid to a competitive equilibrium.

\begin{definition}
A feasible control profile $u^*$ is a Nash equilibrium of the game in \eqref{eq:game} if, for all $i \in \mathcal{I}$,
\begin{align}
J_i(u_i^*,h(u^*;w)) \leq J_i(u_i,h((u_i,u_i^*);w)), \quad \forall u_i \in \mathcal{U}_i.
\end{align}
\end{definition}

Before presenting the main stability result, we introduce some preliminary technical assumptions.
\begin{assumption} \label{ass:PGmon} For all $i \in \mathcal I$, $J_i(u_i,h(u;w))$ is convex with respect to $u_i$ and continuously differentiable. The pseudo-gradient $\mathbb{F}$ of the game \eqref{eq:game}, defined as $\mathbb{F}(u) := \col(F_1(u_1,h_1(u;w)),\ldots,F_N(u_N,h_N(u;w)))$, is $\mu$-cocoercive, i.e., $\forall u,u' \in \mathcal{U}$
\begin{align}
\langle \mathbb{F}(u)-\mathbb{F}(u'), \, u-u'\rangle \geq \mu \|\mathbb{F}(u)-\mathbb{F}(u')\|^2.
\end{align}
\end{assumption}
This assumption is quite standard in the literature of game theory \cite{belgioioso2022distributed} and is one of the weakest under which existence of a Nash equilibrium can be proven.

\begin{assumption} \label{ass:StepSize}
For all $i \in \mathcal{I}$, $\gamma_i \in (0,2\mu)$.
\end{assumption}
This assumption on the local gains $\gamma_i$ limits the aggressiveness of the local OFO controllers and, consequently, its tracking performance in a time-varying setting.
%\blue{GB: shall we mention trade-off between performance and stability?} \textcolor{red}{Not sure... I suspect that the real tradeoff would be between performance and robustness to model mismatch. I only added half a sentence.}
With these assumptions in place, we are ready to prove stability.
\begin{theorem} \label{th:main}
Under Assumptions \ref{ass:PGmon} and \ref{ass:StepSize}, the sequence of output $\{y^k\}_{k \in \mathbb N}$ generated by \eqref{eq:OFOsinglearea} converges to some $y^*=h(u^*;w)$, where $u^*$ is a Nash equilibrium of the game \eqref{eq:game}.
\end{theorem}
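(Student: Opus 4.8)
The plan is to recognize the closed-loop dynamics as a \emph{projected pseudo-gradient} iteration and to invoke the theory of averaged operators. First I would substitute the feedback measurement $y_i^k = h_i(u^k;w)$ into \eqref{eq:OFOsinglearea}; stacking the $N$ updates and recalling the definition of $\mathbb{F}$ collapses the whole grid--controllers interconnection into the single recursion
\begin{equation*}
u^{k+1} = \proj_{\mathcal{U}}\big[u^k - \Gamma\,\mathbb{F}(u^k)\big], \qquad \Gamma := \diag(\gamma_1 I,\ldots,\gamma_N I),
\end{equation*}
on the product set $\mathcal{U} = \mathcal{U}_1\times\cdots\times\mathcal{U}_N$. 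By convexity of each $J_i(\cdot,h(\cdot;w))$ in $u_i$ (Assumption~\ref{ass:PGmon}), $u^*$ is a Nash equilibrium if and only if it solves the variational inequality $\langle \mathbb{F}(u^*),u-u^*\rangle\ge 0$ for all $u\in\mathcal{U}$, which by the projection characterization is equivalent to $-\mathbb{F}(u^*)\in\nc_{\mathcal{U}}(u^*)$, i.e. to $u^*$ being a fixed point of $T:=\proj_{\mathcal{U}}\circ(\Id-\Gamma\mathbb{F})$ (the positive gains $\gamma_i$ are absorbed into the cone $\nc_{\mathcal{U}}$). Thus proving the theorem reduces to showing that iterating $T$ converges to one of its fixed points.

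The main obstacle is the \emph{heterogeneity} of the gains $\gamma_i$: were all gains equal, $\Id-\gamma\mathbb{F}$ would be averaged in the Euclidean norm directly from cocoercivity. To handle distinct $\gamma_i$ I would work in the weighted inner product $\langle x,y\rangle_\Gamma := x^\top\Gamma^{-1}y$ with induced norm $\|\cdot\|_\Gamma$. Two facts make this geometry convenient. First, because $\mathcal{U}$ is a product set and $\Gamma$ is block-diagonal with scalar blocks, the $\Gamma$-weighted projection onto $\mathcal{U}$ decomposes block-wise and \emph{coincides} with the Euclidean projection $\proj_{\mathcal{U}}$ actually executed by the algorithm; hence $\proj_{\mathcal{U}}$ is firmly nonexpansive, and therefore averaged, in $\|\cdot\|_\Gamma$. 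Second, setting $d:=x-y$ and $g:=\mathbb{F}(x)-\mathbb{F}(y)$, expanding $\|d-\Gamma g\|_\Gamma^2$ and using the identity $\langle\Gamma g,d\rangle_\Gamma = \langle g,d\rangle$ together with $\mu$-cocoercivity (Assumption~\ref{ass:PGmon}) and $\|\Gamma g\|_\Gamma^2 = g^\top\Gamma g\le(\max_i\gamma_i)\|g\|^2$ yields
\begin{equation*}
\|(\Id-\Gamma\mathbb{F})(x)-(\Id-\Gamma\mathbb{F})(y)\|_\Gamma^2 \le \|x-y\|_\Gamma^2 - \big(2\mu-\textstyle\max_i\gamma_i\big)\,\|\mathbb{F}(x)-\mathbb{F}(y)\|^2 .
\end{equation*}
Assumption~\ref{ass:StepSize} guarantees $2\mu-\max_i\gamma_i>0$, so $\Id-\Gamma\mathbb{F}$ is nonexpansive (indeed averaged) in $\|\cdot\|_\Gamma$.

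With both factors averaged in the same norm, their composition $T$ is averaged in $\|\cdot\|_\Gamma$. I would then invoke existence of a Nash equilibrium --- guaranteed by cocoercivity (hence monotonicity and continuity of $\mathbb{F}$) together with boundedness of the physical input set $\mathcal{U}$, via standard monotone variational-inequality theory --- so that $\fix T\neq\emptyset$. The Krasnosel'ski\u\i--Mann / averaged-operator convergence theorem then gives $u^k\to u^*$ for some $u^*\in\fix T$ in $\|\cdot\|_\Gamma$, equivalently in the Euclidean norm since $\Gamma\succ 0$. By the first paragraph $u^*$ is a Nash equilibrium, and continuity of $h(\cdot;w)$ finally yields $y^k=h(u^k;w)\to h(u^*;w)=y^*$. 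The only delicate points I anticipate are the weighted-norm bookkeeping that reconciles the heterogeneous gains with the Euclidean block-projection, and pinning down a precise existence result for $u^*$.
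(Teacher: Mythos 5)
Your proposal is correct and takes essentially the same approach as the paper: it collapses the closed loop into the stacked projected pseudo-gradient iteration $u^{k+1}=\proj_{\mathcal U}\big[u^k-\Gamma\mathbb F(u^k)\big]$, passes to the $\Gamma^{-1}$-weighted norm to absorb the heterogeneous gains, uses $\mu$-cocoercivity together with $\max_i\gamma_i<2\mu$ to make the forward step averaged (equivalently, $\Gamma\mathbb F$ is $1/2$-cocoercive in that norm), and obtains existence of a Nash equilibrium from boundedness of $\mathcal U$ plus maximal monotonicity of $\mathbb F$, exactly as in the paper's appendix. The only difference is presentational: the paper cites the forward--backward splitting theorem \cite[Theorem~26.14]{bauschke2011convex} where you re-derive its proof via averaged operators and the Krasnosel'skii--Mann theorem, and your explicit closing step $y^k=h(u^k;w)\to h(u^*;w)$ by continuity makes precise a detail the paper's proof leaves implicit.
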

\begin{proof}
The proof is given in the appendix.
\end{proof}

\subsection{Simulations on the IEEE 30 bus power flow test case}

In this section, we study the properties of the multi-area OFO controllers in \eqref{eq:OFOsinglearea} via numerical simulations on the IEEE 30 bus power flow test case. This benchmark grid consists of $41$ lines and $30$ buses with the following elements: 5 PV generators, 3 PQ generators, 20 loads, and 1 external grid (slack bus).
 %All the parameters of the PV and PQ generators are summarized in Tables \GB{Add tables}.
Further, we assume that the grid is divided in three areas $\mathcal I :=\{1,2,3\}$, locally controlled but physically coupled. An illustration of the composition, topology, and partition of the grid is shown in Figure~\ref{fig:IEEE30bus}.
In all simulations, we used Pandapower \cite{thurner2018pandapower}, an AC power flow solver, to compute the output $y$ of the power flow, i.e., $h(u;w)$. 
\begin{figure}[t]
    \centering
    \includegraphics[width=\columnwidth]{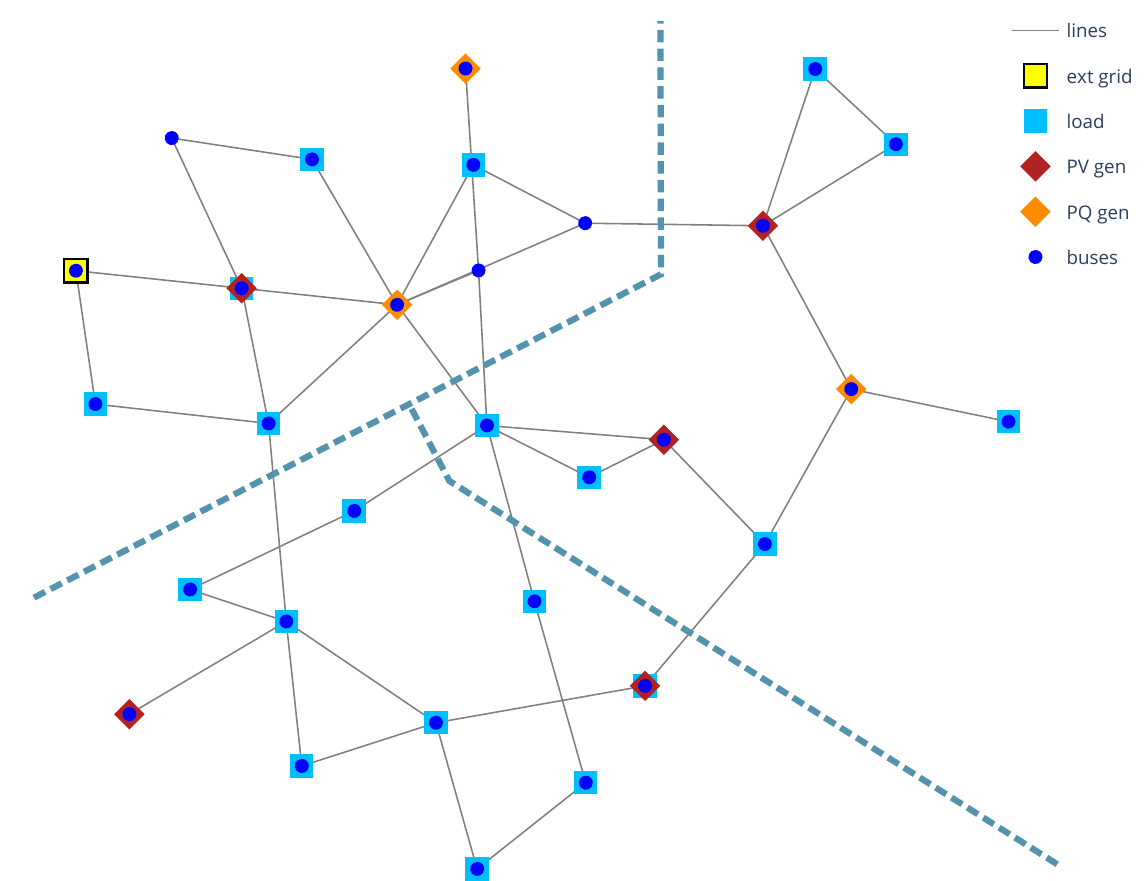}
    \caption{Composition and topology of the IEEE 30 bus power flow test case. The dashed lines represent the division of the grid into three areas.}
    \label{fig:IEEE30bus}
\end{figure}

\subsubsection{Stability of multi-area OFO control}
In this first case study, we show that the multi-area OFO controllers in \eqref{eq:OFOsinglearea} indeed stabilize the transmission grid to a competitive steady-state equilibrium. We simulate 134 minutes of closed-loop grid operation, with a 10 seconds delay between sensing and actuation. Namely, every 10 seconds, the controllers receive field measurements $y_i$ of the grid state, computes the local gradients as in \eqref{eq:locGrad}, and update the set points of the controllable generators $u_i$ for the next iteration.  Figures~\ref{fig:output1} and \ref{fig:output2} show the resulting evolution of the grid current and voltage magnitudes, respectively. We see that after a quick transitory, both voltage and magnitude stabilize to a steady-state, which corresponds to a Nash equilibrium of the game \eqref{eq:game}, by Theorem~\ref{th:main}. We also note that such competitive steady-state satisfies the output constraints. In particular, we observed that the multi-area OFO controllers quickly drive the current of line 9, which is initialized in an overloaded state, within the safe limits, as shown Figures~\ref{fig:output1}.

\begin{figure}[t]
    \centering
    \includegraphics[width=\columnwidth]{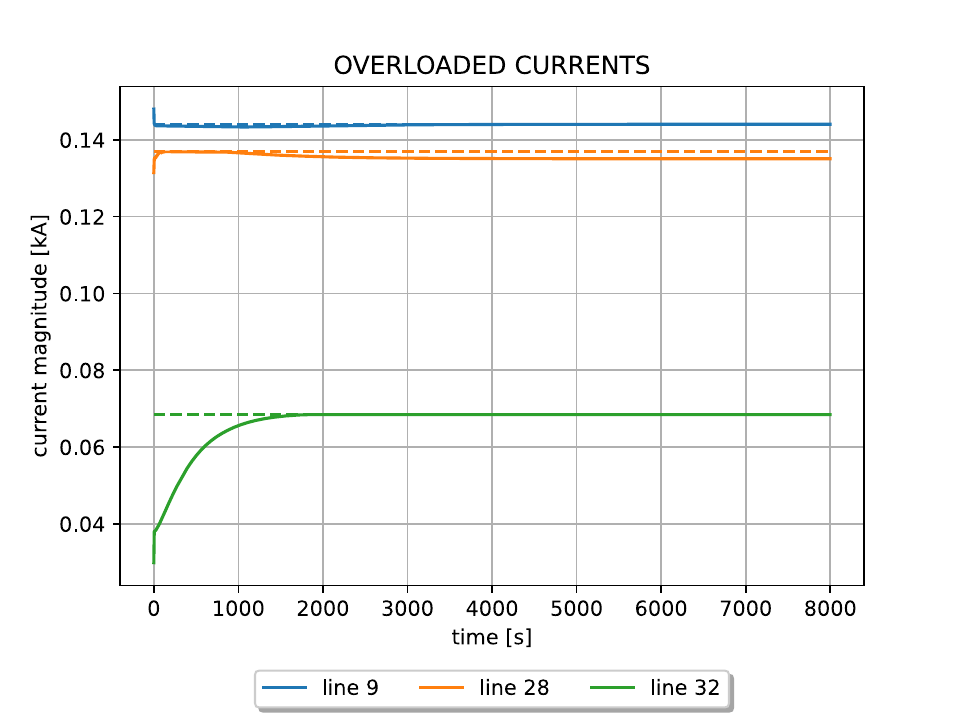}
    \caption{Evolution of the line currents (solid lines) of the IEEE 30 bus grid in Figure~\ref{fig:IEEE30bus} under the multi-area OFO controllers \eqref{eq:OFOsinglearea}. The dashed lines represents the correspondent line limits.}
    \label{fig:output1}
\end{figure}
\begin{figure}[t]
    \centering
    \includegraphics[width=\columnwidth]{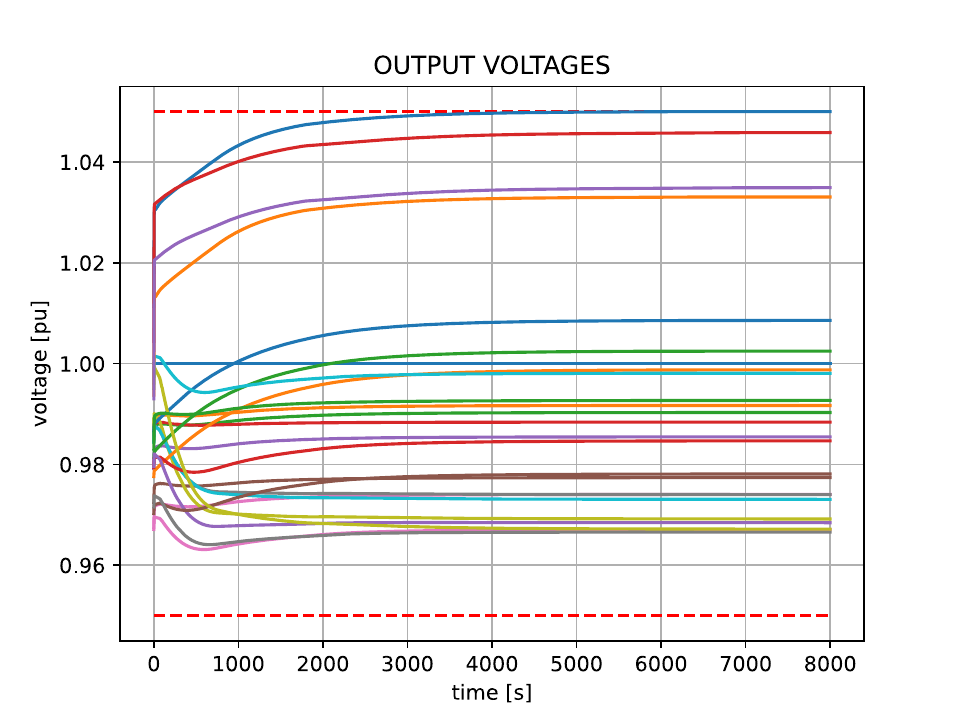}
    \caption{Evolution of the line voltages of the IEEE 30 bus grid in Figure~\ref{fig:IEEE30bus} under the multi-area OFO controllers \eqref{eq:OFOsinglearea}.}
    \label{fig:output2}
\end{figure}

\subsubsection{Multi-area OFO versus centralized OFO control}
In this case study, we compare the operation and the economic performance of the multi-area OFO controllers in \eqref{eq:OFOsinglearea} against the centralized OFO controller obtained by applying the iteration \eqref{eq:gradient-descent} to the grid as a single area. Under appropriate choices of the control gains $\alpha$ and $\gamma_i$'s, both control designs are able to stabilize the grid to feasible operating points, as predicted by the theory. However, while the centralized OFO controller stabilizes the grid to a social optimal state, multi-area OFO control stabilizes the grid to a competitive (Nash) state. The resulting total active power curtailment and the correspondent curtailment cost for each area at the two different operating points are plotted in Figure~\ref{fig:tables_cropped}. Note that while the total active power curtailed is comparable ($\sim$138 MW in both cases), the cumulative curtailment cost of the multi-area case is 81\% larger than the centralized case. This is not surprising as the centralized OFO controller is specifically designed to minimize the social cost at steady-state. On the other, the resulting steady-state is not strategically stable, namely, some areas may unilaterally decrease their local costs by changing their control inputs.

\begin{figure}[t]
    \centering
    \includegraphics[width=\columnwidth]{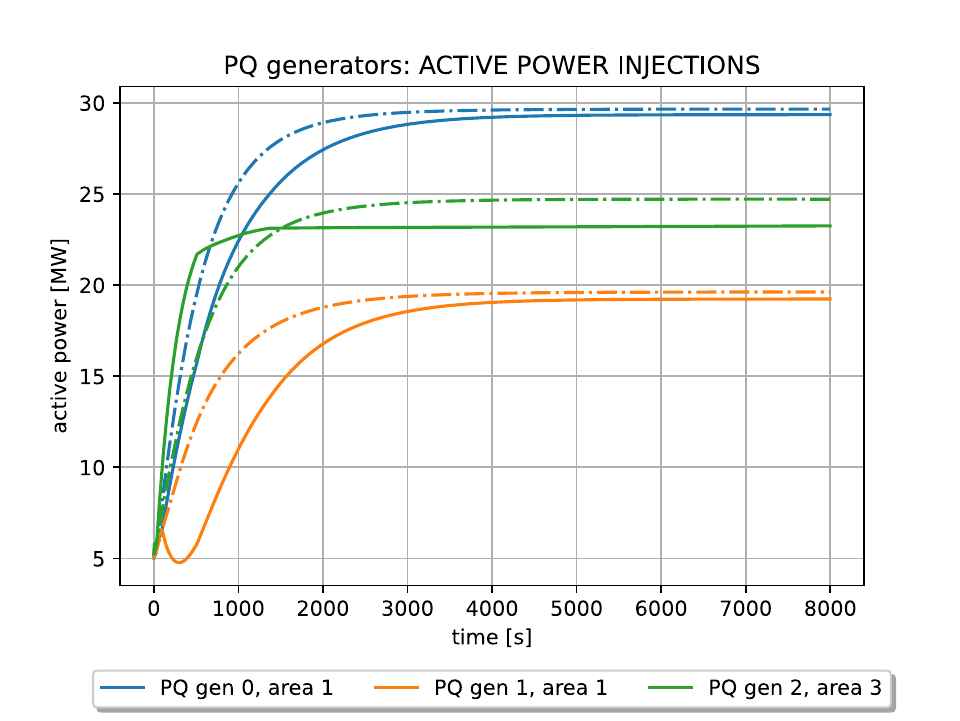}
    \caption{Evolution of the active power injections of the PQ generators in the different areas of the grid in Fig.~\ref{fig:IEEE30bus} under the multi-area OFO controllers (solid lines) and the centralized OFO controller (dot-dashed lines), respectively.}
    \label{fig:PQ}
\end{figure}
\begin{figure}[t]
    \centering
    \includegraphics[width=\columnwidth]{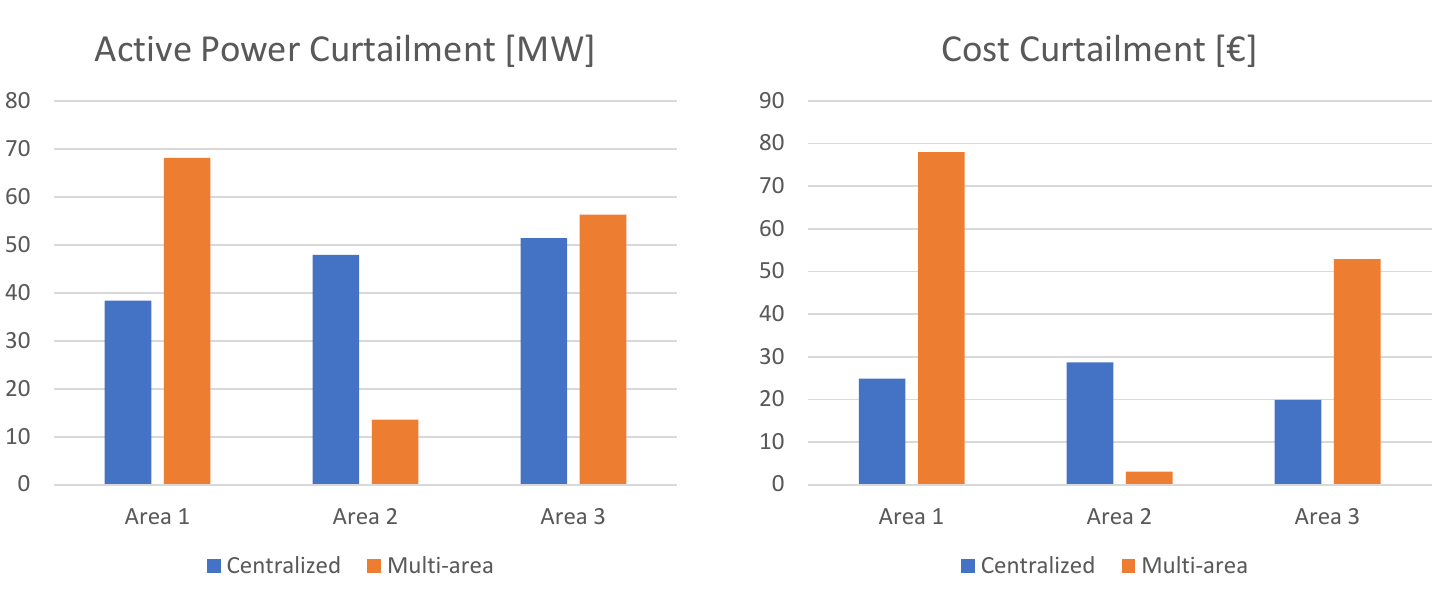}
    \caption{Active power curtailment (left) and correspondent curtailment cost (right), namely, the purely monetary part of the cost functions $J_i$, for the different areas in Fig.~\ref{fig:IEEE30bus} at the steady-state operation obtained by the multi-area OFO controllers in \eqref{eq:OFOsinglearea} and the centralized OFO controller.}
    \label{fig:tables_cropped}
\end{figure}

%\section{Discussion}
%\begin{itemize}
%\item time-scale separation
%\item incentives
%\item constraints enforcement (multi-area sometimes fails)
%\item RTI vs distirbution grids, belgium situation
%\item realistic model for multi-area setting - outlook
%\end{itemize}

%\clearpage
\section{Conclusion and Outlook}
Online Feedback Optimization is an attracting control methodology for real-time operation of large scale transmission grids, as it is requires minimal information on the grid model and it is robust to exogenous disturbances affecting the grid (e.g., uncontrollable loads and variable generation). In multi-area transmission grids, where the sub-areas autonomously regulate their congestion via local OFO controllers, the resulting closed-loop dynamics can be studied using tools form game theory and monotone operator theory. We showed that, under some technical conditions on the parameters of the local OFO controllers, the multi-area transmission grid stabilizes to a competitive equilibrium. We show via numerical simulations that such equilibria are globally inefficient, compared to a centralized socially-optimal steady state. On the other hand, socially-optimal steady-state are not strategically stable nor incentive compatible, meaning that some areas may be heavily penalized by participating in a centralized control design.
Hierarchical game theory offers a solution to this inherent trade off in the form of incentives that need to be properly designed in order to make the socially optimal state strategically stable. The design of these incentives for real-time operation of modern multi-area transmission grids is largely unexplored.

%Future research direction will include incentives design, and dynamic analysis.

\appendix
\section*{Proof of Theorem \ref{th:main}}
We start by proving existence of A Nash equilibrium of \eqref{eq:game}.
By \cite[Corollary~3.4]{facchinei2010generalized}, the Nash equilibria of \eqref{eq:game} correspond to the solution of the following generalized equation (GE), or variational inequality, $0 \in \mathcal N_{\mathcal U}(u) + \mathbb{F}(u)$, where $\mathcal N_{\mathcal U}$ is the normal cone operator of the Cartesian set ${\mathcal U}=\prod_{i \in \mathcal{I}}\mathcal{U}_i$. Moreover, by \cite[Proposition~23.36]{bauschke2011convex}, this GE admits at least one solution since the $\mathcal U_i$ are bounded and $\mathbb{F}$ is cocoercive (hence, also maximally monotone\cite[Example 20.31]{bauschke2011convex}) by Assumption~\ref{ass:PGmon}. It follows that at least a Nash equilibria of \eqref{eq:game} exists.
Next, we show that the closed-loop discrete-time dynamics in \eqref{eq:OFOsinglearea} globally converge to a Nash equilibrium. First, we note that by stacking up \eqref{eq:OFOsinglearea}, we can re-write them more compactly as 
\begin{align}
\label{eq:FB}
u^{k+1} &= \textrm{proj}_{\mathcal{U}}\left(u - \Gamma \mathbb F(u) \right)\\
&= (\Id + \Gamma \mathcal N_{\mathcal U})^{-1} \left(u - \Gamma \mathbb F(u) \right) ,
\end{align}
where $\Gamma = \textrm{diag}(\gamma_1,\ldots,\gamma_N)$ is a diagonal matrix with the control gains $\gamma_i$ on the main diagonal. The iteration \eqref{eq:FB} corresponds to a forward-backward splitting algorithm \cite[\S~26.5]{bauschke2011convex}, which converges to a zero of $\Gamma \mathbb F + \Gamma \mathcal N_{\mathcal U}$ under the conditions of \cite[Theorem~26.14]{bauschke2011convex}. Namely, cocoercivity of $\Gamma \mathbb F$, maximal monotonicity of $\Gamma \mathcal N_{\mathcal U}$, and existence of a solution. The first condition holds in the weighted norm $\|\cdot\|_{\Gamma^{-1}}$, with parameter $1/2$, since for all $u,u' \in \mathcal{U}$
\begin{align*}
\langle \mathbb F(u) - \mathbb F(u'), \; u-u'\rangle_{\Gamma^{-1}} & \geq  \mu \|\mathbb F(u) - \mathbb F(u')\|^{2},\\
& \geq \displaystyle \mu/ (\max_{i\in\mathcal{I}} \gamma_i) \, \|\mathbb F(u) - \mathbb F(u')\|_{\Gamma^{-1}}^{2},\\
&> 1/2 \, \|\mathbb F(u) - \mathbb F(u')\|_{\Gamma^{-1}}^{2},
\end{align*}
where the first inequality follows from the cocoercivity of $\mathbb F$ (Assumption \eqref{ass:PGmon}), the second since $\|\cdot\|^2_{\Gamma^{-1}} \geq \eigmin{\Gamma^{-1}} \| \cdot \|^2 $ with $\eigmin{\Gamma^{-1}} = (\max_{i\in\mathcal{I}} \gamma_i)^{-1}$, and the third since $\gamma_i \in (0,2\mu)$ by Assumption~\ref{ass:StepSize}.
The second condition, namely maximal monotonicity of $\Gamma \mathcal N_{\mathcal U}$, holds in the same norm, $\|\cdot\|_{\Gamma^{-1}}$ by \cite[Example~20.26, Proposition~20.24]{bauschke2011convex} since $\Gamma$ is a diagonal positive  matrix. Finally, we note that $0 \in \Gamma (\mathbb F(u) + \mathcal N_{\mathcal U}(u))$ if and only if $ 0 \in  \mathbb F(u) + \mathcal N_{\mathcal U}(u))$. Note that the latter GE characterizes the set of Nash equilibria of \eqref{eq:game}, which is nonempty by the first part of the proof. Now, we can finally invoke \cite[Theorem~26.14]{bauschke2011convex} to prove convergence of the sequence  $\{u^k \}_{k \in \mathbb N}$ generated by \eqref{eq:FB} to some $u^*$ such that $0 \in \textrm{zer}( \mathbb F(u^*) + \mathcal N_{\mathcal U}(u^*))$, which is a Nash equilibrium of \eqref{eq:game}, and concludes the proof.
{\hfill $\square$}

\balance
\bibliographystyle{IEEEtran}
\bibliography{SaverioBolognani20230915.bib,biblio.bib} 

\end{document}